\newcounter{claim-counter}
\theoremstyle{plain}
\newtheorem{thm}{Theorem}[section] 
\newtheorem{theorem}{Theorem}[section]
\newtheorem{lemma}[thm]{Lemma}
\newtheorem{prop-defi}[thm]{Definition \& Proposition}
\newtheorem*{thm*}{Theorem}
\newtheorem*{prop*}{Proposition}
\newtheorem*{cor*}{Corollary}
\newtheorem{proposition}[thm]{Proposition}
\declaretheorem[style=theorem,name={Theorem}]{theoremletter}
\theoremstyle{definition}
\newtheorem{definition}[thm]{Definition}
\newtheorem{remark}[thm]{Remark}
\newtheorem*{claim*}{Claim}
\newcommand{\NN}{{\mathbb N}}
\newcommand{\ZZ}{{\mathbb Z}}
\newcommand{\RR}{{\mathbb R}}
\newcommand{\CC}{{\mathbb C}}
\newcommand{\varps}{{\varepsilon}}
\renewcommand{\Im}{{\operatorname{Im}}}
\renewcommand{\Re}{{\operatorname{Re}}}
\newcommand{\To}{\longrightarrow}
\newcommand{\del}{{\partial}}
\renewcommand{\min}{{\operatorname{min}}}
\renewcommand{\leq}{\leqslant}
\renewcommand{\geq}{\geqslant}
\newcommand{\Dom}{\operatorname{Dom}}
\newcommand{\Lip}{\operatorname{Lip}}
\newcommand{\black}{\color{black}}
\renewcommand{\epsilon}{{\varepsilon}}
\title{Gromov-Hausdorff convergence of quantised intervals}
\author{Thomas Gotfredsen}
\address{Thomas Gotfredsen, Department of Mathematics and Computer Science, University of Southern Denmark, Campusvej 55, DK-5230 Odense M, Denmark}
\email{thgot@imada.sdu.dk}
\author{Jens Kaad}
\address{Jens Kaad, Department of Mathematics and Computer Science, University of Southern Denmark, Campusvej 55, DK-5230 Odense M, Denmark}
\email{kaad@imada.sdu.dk}
\author{David Kyed}
\address{David Kyed, Department of Mathematics and Computer Science, University of Southern Denmark, Campusvej 55, DK-5230 Odense M, Denmark}
\email{dkyed@imada.sdu.dk}
\subjclass[2010]{58B32, 58B34, 46L89, 46L30}
\keywords{Quantum metric spaces, Podle\'s sphere, Gromov-Hausdorff distance.}
\begin{document}

\begin{abstract}
The Podle\'s quantum sphere $S^2_q$ admits a natural commutative $C^*$-subalgebra $I_q$  with spectrum $\{0\} \cup \{q^{2k}: k\in \NN_0\}$, which may therefore be considered as a quantised version of a classical interval. We study here the compact  quantum metric space structure  on $I_q$ inherited from the corresponding structure on $S^2_q$, and provide an explicit formula for the metric induced on the spectrum. Moreover, we show that the resulting metric spaces vary continuously in the deformation parameter $q$ with respect to  the Gromov-Hausdorff distance, and that they converge to a classical interval of length $\pi$ as $q$ tends to $1$.
\end{abstract}

\maketitle

\section{Introduction}
The study of compact quantum metric spaces dates back to the work of Connes \cite{Connes}, in which he studied metrics on state spaces of spectral triples. This notion was later formalised in the works of Rieffel \cite{Rieffel-1,Rieffel-2,Rieffel-4}, in which the weak $*$-topology on the state space is metrised by the Monge-Kantorovich metric coming from a so-called Lip-norm on a $C^*$-algebra (see Section \ref{sec:prelim} for details).  As shown by Rieffel, the classical Gromov-Hausdorff distance admits an analogue, known as quantum Gromov-Hausdorff distance, for compact quantum metric spaces, and this notion was later refined by   Latrémolière through his notion of propinquity \cite{Latremoliere}. Although examples of compact quantum metric spaces are abundant,  some of the most basic examples from non-commutative geometry are not well understood from this point of view, and only very recently,  Aguilar and Kaad \cite{Kaad-Aguilar} showed that the Podle\'s standard sphere $S_q^2$, introduced as a homogeneous space of Woronowicz' $q$-deformed $SU(2)$ \cite{Podles,Woronowicz-SU2}, admits a natural compact quantum metric space structure stemming from its non-commutative geometry. More precisely, Aguilar and Kaad show that the Lip-norm arising from the Dirac operator  ${D_q}$ of the Dąbrowski-Sitarz spectral triple \cite{Dabrowski-Sitarz}, does indeed provide a quantum metric structure on $S_q^2$. The main question left open in \cite{Kaad-Aguilar} is that  of quantum Gromov-Hausdorff convergence of $S_q^2$ to the classical $2$-sphere $S^2$ as the deformation parameter tends to $1$. This question seems rather difficult to settle\footnote{Note: building on the results of the present paper, this was very recently settled in \cite{AKK:PodCon}.}, and the aim of the present paper is to show that the Podle\'s sphere $S_q^2$ contains a natural commutative $C^*$-algebra $I_q$ for which the corresponding convergence question can be settled, and that the answer supports the more general conjecture that $S_q^2$ converges to $S^2$ as $q$ tends to $1$.  The Podle\'s sphere is generated by a self-adjoint operator $A$ and a non-normal operator $B$ (see Section \ref{sec:prelim} for precise definitions), and the $C^*$-algebra $I_q$ is simply the unital $C^*$-algebra generated by $A$ inside $S_q^2$. 
Since $S_q^2$ admits a rather accessible representation on $B(\ell^2(\NN_0))$ \cite[Proposition 4]{Podles}, the spectrum of the self-adjoint generator $A\in S_q^2$ is easily derivable,  and one finds that for  $q\in(0,1)$ this is exactly the set

$$X_q=\lbrace 0 \rbrace \cup \lbrace q^{2k} \colon k\in \NN_0 \rbrace,$$
which can therefore be viewed as a quantised version of a classical interval. The Lip-norm $L_{D_q}$ coming from the Dirac operator on $S_q^2$ therefore, in particular, provides a metric on the state space of $I_q\cong C(X_q)$ and embedding $X_q$ into the state space of $C(X_q)$ as point-evaluations, we obtain a metric $d_q$ on $X_q$. Our first main result determines an explicit formula for this metric.

\begin{theoremletter}
\label{Theorem:Lip-Lip}
For $q\in (0,1)$, the metric $d_q$ on $X_q$ is given by the following formula:
\begin{align*}
d_q(x,y)\coloneqq
\begin{cases} \hspace{0.4cm }0 &\mbox{if } x=y \\
 \displaystyle\sum_{k=\min\lbrace m,n \rbrace}^{\max\lbrace m,n \rbrace-1} \frac{(1-q^2)q^k}{\sqrt{1-q^{2(k+1)}}} & \mbox{if } x= q^{2n} \text{ and } y= q^{2m}  \text{ with } n\neq m \\
 \displaystyle\sum_{k=n}^\infty \frac{(1-q^2)q^k}{\sqrt{1-q^{2(k+1)}}} & \text{ if } x= q^{2n} \text{ and } y=0 \text{ or } x=0 \text{ and } y= q^{2n}. 
  \end{cases}
\end{align*}
\end{theoremletter}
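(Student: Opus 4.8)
The plan is to compute the Lip-norm $L_{D_q}$ explicitly on the commutative subalgebra $I_q$ and then to solve the resulting Monge--Kantorovich optimisation. By definition of the metric induced on $X_q$ via point-evaluations, for $x,y \in X_q$ we have
\[
d_q(x,y) = \sup\{|f(x) - f(y)| : f = f^* \in I_q, \ L_{D_q}(f) \leq 1\},
\]
where $f \in I_q \cong C(X_q)$ is identified with the corresponding (real-valued) function on the spectrum. Since $A$ acts diagonally in the Podle\'s representation with eigenvalues $q^{2k}$, every such $f$ is a diagonal operator with entries $f_k \coloneqq f(q^{2k})$, so the first task is to understand the commutator $[D_q, f]$ for diagonal $f$.

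First I would write out the Dirac operator $D_q$ and the representation explicitly, as recalled earlier, and compute $[D_q, f]$. Because $D_q$ is built from the off-diagonal generators $B$ and $B^*$, which act as weighted shifts moving the $k$-th basis vector to its neighbours, the commutator only sees the consecutive differences $f_k - f_{k+1}$: after accounting for the spinor structure, $[D_q,f]$ reduces to a weighted-shift-type operator sending the $k$-th basis vector to $t_k(f_k - f_{k+1})$ times a neighbour, for explicit coefficients $t_k$ coming from the shift weights of $B$. The norm of such a weighted shift is simply the supremum of the absolute values of its weights, so I expect
\[
L_{D_q}(f) = \sup_{k \in \NN_0} \frac{|f_k - f_{k+1}|}{w_k}, \qquad w_k = \frac{(1-q^2)q^k}{\sqrt{1-q^{2(k+1)}}},
\]
where $w_k = 1/t_k$. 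The main obstacle is to pin down the coefficients $t_k$ exactly, including the square-root factor $\sqrt{1-q^{2(k+1)}}$ arising from the shift weights of $B$, and to verify that the full spinor structure of $D_q$ does not contribute cross terms that would inflate the norm beyond the nearest-neighbour supremum; I would handle this by sandwiching the norm between an obvious lower bound (testing on a single pair of basis vectors) and the weighted-shift upper bound.

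Once the Lip-norm is identified as this weighted nearest-neighbour norm, the metric formula follows by a clean optimisation, since the constraint $L_{D_q}(f) \leq 1$ is exactly $|f_k - f_{k+1}| \leq w_k$ for all $k$. Telescoping along the chain of consecutive eigenvalues between $q^{2n}$ and $q^{2m}$ gives
\[
|f(q^{2n}) - f(q^{2m})| \leq \sum_{k=\min\{m,n\}}^{\max\{m,n\}-1} w_k,
\]
and equality is attained by the monotone function whose consecutive increments equal $w_k$ with a consistent sign, yielding the middle case of the statement. For the boundary point $0$ one uses $f(0) = \lim_k f_k$ and sums the tail from $k=n$ to $\infty$; here one must check that the series $\sum_{k \geq n} w_k$ converges — which it does, since $w_k \sim (1-q^2)q^k$ as $k \to \infty$ — so that the extremal $f$ genuinely lies in $C(X_q)$, giving the final case of the formula.
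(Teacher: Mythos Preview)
Your approach is essentially the paper's: identify $L_{D_q}$ on $I_q$ as the weighted nearest-neighbour supremum $\sup_k \rho_q(k)\,|f(q^{2k})-f(q^{2(k+1)})|$ (with $\rho_q(k)=1/w_k$ in your notation), then recover $d_q$ by telescoping. The paper does exactly this, computing $\partial_1(f(A))\partial_1(f(A))^*$ from the known formula for $\partial_1(\chi_{\{q^{2k}\}}(A))$ and reading off the operator norm; the telescoping optimisation and the tail-convergence check for the point $0$ match yours.

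One point you underweight, and which the paper explicitly flags as subtle: the passage from \emph{finite} linear combinations of indicator functions to general $f\in C_{\Lip}(X_q)$. The derivation $\partial$ is closed, but $\Lip_{D_q}(I_q)$ is \emph{not} the closure of its restriction to finite sums, so one cannot simply write $f=f(0)+\sum_k (f_k-f(0))\chi_{\{q^{2k}\}}$ and differentiate term by term. Your line ``compute $[D_q,f]$ directly for diagonal $f$'' hides this. The paper handles it in two halves: for $f(A)\in\Lip_{D_q}(I_q)$ it computes $\partial_1(f(A))Q_n$ via the Leibniz rule (with $Q_n=\sum_{k\leq n}\chi_{\{q^{2k}\}}(A)$) and then uses $Q_n\to 1$ strongly (verified through the Haar state) to obtain $\|\partial_1(f(A))\|=\sup_n\|\partial_1(f(A))Q_n\|$; conversely, for $f\in C_{\Lip}(X_q)$ it invokes lower semi-continuity of $L_{D_q}$ together with boundedness of the truncated Lipschitz constants $L_{d_q}(f\cdot\chi_{\{q^{2k}:k\leq n\}})$ to conclude $f(A)\in\Lip_{D_q}(I_q)$. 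Also bear in mind that $D_q$ acts on $H_+\oplus H_-$ (GNS completions of the spinor modules $\mathcal{A}_{\pm 1}$), not on the Podle\'s representation space $\ell^2(\NN_0)$ where $A$ is diagonal; your ``weighted shift'' picture is morally correct, but the actual bookkeeping runs through the factor $b^*a^*$ mapping $H_+$ to $H_-$.
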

 
When $q=1$, the spectrum of the operator $A$ becomes $X_1:=[0,1]$ and in Section \ref{subsec:commutative} we will show that when $X_1$ is equipped with the metric $d_1$  inherited from the classical $2$-sphere $S^2$, then the space $(X_1,d_1)$ becomes isometrically isomorphic to $[-\frac{\pi}{2}, \frac{\pi}{2}]$ with its standard Euclidian metric.  Our second main theorem therefore confirms that the quantised intervals do indeed converge to the appropriate classical interval as the deformation parameter tends to 1:

\begin{theoremletter}
\label{Theorem:Cont}
The metric spaces $(X_q,d_q)$ vary continuously with respect to the Gromov-Hausdorff distance in the deformation parameter $q\in (0,1)$ and converge to  
the interval $\left[-\tfrac{\pi}{2},\tfrac{\pi}{2}\right]$ with its standard metric as $q$ tends to 1.

\end{theoremletter}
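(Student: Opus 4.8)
The plan is to exploit the explicit formula of Theorem~\ref{Theorem:Lip-Lip}, which shows that $d_q$ is additive along the totally ordered chain $1=q^0>q^2>q^4>\cdots>0$. Concretely, setting $a_k(q):=\tfrac{(1-q^2)q^k}{\sqrt{1-q^{2(k+1)}}}$ and
$$\phi_q(q^{2n}):=\sum_{k=n}^\infty a_k(q), \qquad \phi_q(0):=0,$$
the formula of Theorem~\ref{Theorem:Lip-Lip} says precisely that $d_q(x,y)=|\phi_q(x)-\phi_q(y)|$ for all $x,y\in X_q$. Hence $\phi_q$ is an isometric embedding of $(X_q,d_q)$ onto the closed subset $S_q:=\{0\}\cup\{\phi_q(q^{2n}):n\in\NN_0\}$ of the interval $[0,L_q]$, where $L_q:=\phi_q(1)=\sum_{k\ge0}a_k(q)=d_q(1,0)$. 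This single observation reduces both assertions to elementary estimates on subsets of $\RR$, since I may then bound the Gromov--Hausdorff distance either by the Hausdorff distance of images in $\RR$, or by the distortion $\operatorname{dis}(R)$ of an explicit correspondence $R$, via $d_{\mathrm{GH}}(Y,Z)\le\tfrac12\operatorname{dis}(R)$.

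For continuity at a fixed $q_0\in(0,1)$, I would use the index-matching correspondence $R=\{(q^{2n},q_0^{2n}):n\in\NN_0\}\cup\{(0,0)\}$. Since $d_q=|\phi_q(\cdot)-\phi_q(\cdot)|$, the inequality $\big||a-b|-|c-d|\big|\le|a-c|+|b-d|$ gives $\operatorname{dis}(R)\le 2\sup_{n}|\phi_q(q^{2n})-\phi_{q_0}(q_0^{2n})|$, so it suffices to prove that the family of tail sums converges \emph{uniformly} in $n$ as $q\to q_0$. This is the standard combination of a uniform tail bound and continuity of finitely many terms: on a compact subinterval of $(0,1)$ one has $a_k(q)\le\sqrt{1-q^2}\,q^k$, whence $\phi_q(q^{2n})\le C\,r^n$ with $r<1$ uniformly, so the tails are uniformly small for $n\ge N$; and for each of the finitely many $n<N$ the map $q\mapsto\phi_q(q^{2n})$ is continuous (Weierstrass $M$-test). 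Combining the two yields $\sup_n|\phi_q(q^{2n})-\phi_{q_0}(q_0^{2n})|\to0$, hence $d_{\mathrm{GH}}\big((X_q,d_q),(X_{q_0},d_{q_0})\big)\to0$.

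For the limit $q\to1^-$ I would first show that the mesh of the partition $S_q$ of $[0,L_q]$ tends to $0$: the gaps are exactly the $a_n(q)$, and a short computation gives $a_n(q)\le a_0(q)=\sqrt{1-q^2}$, so $d_H(S_q,[0,L_q])\le\tfrac12\sqrt{1-q^2}\to0$ in $\RR$. It then remains to prove $L_q\to\pi$, which I would obtain by squeezing. Writing $(1-q^2)q^k=(1+q)(q^k-q^{k+1})$ and using monotonicity of $x\mapsto(1-x^2)^{-1/2}$ to compare $a_k(q)$ with integrals of this function over $[q^{k+1},q^k]$ (left endpoint, upper bound) and over $[q^{k+2},q^{k+1}]$ (right endpoint, lower bound), the two resulting Riemann/telescoping sums yield
$$\frac{1+q}{q}\arcsin q \;\le\; L_q \;\le\; (1+q)\,\frac{\pi}{2},$$
both sides tending to $\pi$ as $q\to1^-$. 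Since $[0,L_q]$ is isometric to an interval of length $L_q$ and $[0,\pi]$ is isometric to $[-\tfrac{\pi}{2},\tfrac{\pi}{2}]$, the triangle inequality for $d_{\mathrm{GH}}$ gives
$$d_{\mathrm{GH}}\big((X_q,d_q),[-\tfrac{\pi}{2},\tfrac{\pi}{2}]\big)\le\tfrac12\sqrt{1-q^2}+\tfrac12|L_q-\pi|\xrightarrow[q\to1]{}0.$$
The main obstacle is the length computation $L_q\to\pi$: the embedding observation makes the Gromov--Hausdorff bookkeeping routine, but identifying the limiting length requires the careful Riemann-sum squeeze above (equivalently, recognising $\phi_q$ as a discretisation of $x\mapsto2\arcsin\sqrt{x}$), and this is where the precise constant $\pi$ --- matching the length of $[-\tfrac{\pi}{2},\tfrac{\pi}{2}]$ --- is pinned down.
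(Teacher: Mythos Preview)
Your proposal is correct and follows essentially the same strategy as the paper: an isometric embedding of $(X_q,d_q)$ into $\RR$ via the additive formula, a uniform-tail-plus-finite-continuity argument for continuity at $q_0\in(0,1)$, and the observation that the mesh $a_0(q)=\sqrt{1-q^2}\to 0$ together with $L_q\to\pi$ for the limit $q\to 1$. The only differences are cosmetic: the paper anchors its embedding at $1$ (setting $\iota_q(x)=d_q(1,x)-\tfrac{\pi}{2}$) so as to work directly inside $[-\tfrac{\pi}{2},\tfrac{\pi}{2}]$, bounds Hausdorff distance directly in $\RR$ rather than via distortion of a correspondence, and proves $L_q\to\pi$ by finding an antiderivative of $1/\rho_q$ in the parameter variable $x$ (obtaining the squeeze $-\tfrac{1-q^2}{q\ln q}\arcsin(q^2)\le L_q\le -\tfrac{1-q^2}{q\ln q}\arcsin q$) instead of your Riemann-sum comparison in the spatial variable.
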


On the class of commutative compact quantum metric spaces, convergence in both  Latrémolière's propinquity \cite{Latremoliere} and Rieffel's quantum Gromov-Hausdorff distance \cite{Rieffel-distance} is implied by convergence in classical Gromov-Hausdorff distance (see Remark \ref{Rem:Dist}) and Theorem \ref{Theorem:Cont} therefore settles all the natural convergence question for the algebras $I_q\cong C(X_q)$.\\

The paper is structured as follows: In the first part we introduce the basic definitions concerning quantum metric spaces, Gromov-Hausdorff distance, $SU_q(2)$ and the standard Podle\'s sphere and the associated Dąbrowski-Sitarz spectral triple. In the second part we first give a description of $I_q$ in the continuum case, i.e. when $q=1$, followed by a thorough treatment of the quantised case, where $SU(2)$ is deformed by a parameter $q\in(0,1)$. For this we provide a detailed treatment of the metric $d_q$, on $X_q$ and its Lipschitz semi-norm from which we can prove Theorem \ref{Theorem:Lip-Lip}, and finally we use this to prove Theorem \ref{Theorem:Cont}.

\subsubsection*{Acknowledgments}
The authors gratefully acknowledge the financial support  from the Independent Research Fund Denmark  through grant no.~7014-00145B and grant no.~9040-00107B.
We are also grateful for the comments and suggestions provided by the anonymous referee.
\subsubsection*{Standing conventions}
The semi-norms appearing in this text are defined everywhere on unital $C^*$-algebras and may take the value infinity.


\section{Preliminaries}\label{sec:prelim}
\subsection{Quantum metric spaces}
We begin this section by recalling some basic facts about metric spaces.
Let $(X,d)$ be a compact metric space. The \emph{Lipschitz semi-norm}, $L_{d}\colon C(X)\to [0,\infty]$, on $C(X)$ is defined by the formula
 $$L_{d}(f)\coloneqq \sup \left\lbrace \frac{\vert f(x)-f(y)\vert}{d(x,y)} \, : x \neq y \right\rbrace; \quad f\in C(X).$$
A continuous function $f : X \to \CC$ is then said to be a Lipschitz function when $L_d(f) < \infty$ and in this case $L_d(f)$ agrees with the Lipschitz constant. The Lipschitz functions on $X$ form a $*$-subalgebra which we denote by $C_{\Lip}(X) \subset C(X)$. Given subsets $A,B\subset X$, their Hausdorff-distance is defined as
\[
\operatorname{dist}_H^d(A,B)\coloneqq\inf \lbrace r\geq 0 \vert A\subset \mathbb{B}(B,r) \text{ and } B\subset \mathbb{B}(A,r)\rbrace,
\]
where $\mathbb{B}(A,r)$ denotes the set $\{x\in X : \exists a\in A : d(x,a)<r\}.$
For two metric spaces $(X,d_X),(Y,d_Y)$,  their \emph{Gromov-Hausdorff distance}  is defined  as
\begin{align*}
\operatorname{dist}_{{GH}}(X,Y) = \inf\lbrace \operatorname{dist}_H^{d_Z}(\iota_X(X), \iota_Y(Y) \rbrace,
\end{align*}
where the infimum ranges over all metric spaces $(Z,d_Z)$ and all isometric embeddings $\iota_X\colon X\to Z$ and $\iota_Y\colon Y\to Z$.
Next, we will recall the relevant definitions for quantum metric spaces.
\begin{definition}[\cite{Rieffel-1, Rieffel-2, Rieffel-4}]
\label{Def:QMS}
Let $A$ be a unital $C^*$-algebra, and let $ L\colon A \to [0,\infty]$ be a semi-norm. We say that $(A,L)$ is a \emph{compact quantum metric space}, and that $L$ is a \emph{Lip-norm}, if the following conditions are satisfied:
\begin{enumerate}
\item $\Dom(L)\coloneqq \lbrace a\in A : L(a)<\infty	\rbrace$ is dense in $A$;
\item $L$ is $*$-invariant and lower semi-continuous on $A$;
\item $\ker(L)\coloneqq \lbrace a\in A : L(a)=0\rbrace =\CC 1_A$;
\item The Monge-Kantorovich metric on the state space $S(A)$ of $A$, given by
$$\operatorname{mk}_L(\mu,\nu)\coloneqq \sup\lbrace \vert \mu(a)-\nu(a)\vert \, : a \in A,L(a)\leq 1 \rbrace, \quad \text{for }\mu,\nu\in S(A)$$ metrises the weak $*$-topology.
\end{enumerate}
\end{definition}
The model example for a compact quantum metric space is, unsurprisingly, $(C(X),L_{d})$ where $(X,d)$ is a compact metric space. In this case it is a well-known fact that the Monge-Kantorovich metric recaptures the metric $d$ on $X$ when the latter is viewed as a subset of the state space of $C(X)$:
\[
d(x,y)=\sup\{|f(x)-f(y)| : f\in C(X), L_d(f)\leq 1\}.
\]
Another interesting class of examples, which dates back to the work of Connes \cite{Connes}, comes from certain spectral triples:
the setting is thus that of a separable Hilbert space $H$ with  a self-adjoint densely defined operator $D\colon \Dom(D)\to H$, and a unital $C^*$-algebra  $A$ represented on $H$ via a $*$-homomorphism $\rho\colon A \to B(H)$.
Then one can  define the \emph{Lipschitz algebra} $\Lip_D(A)$, to consist of all elements $x\in A$ which preserve $\Dom(D)$, and for which $[D,\rho(x)]\colon \Dom(D) \to H$ admits a bounded extension to $H$, which will be denoted by $\partial(x)\in B(H)$. Clearly, $\Lip_D(A) \subset A$ is a $*$-subalgebra and it follows from the definition of a spectral triple that $\Lip_D(A) \subset A$ is norm-dense. From the spectral triple $(A,H,D)$, we also obtain a semi-norm as follows:
\begin{definition}
Define $L_D\colon A\to [0,\infty]$ by the formula
$$L_D(x)\coloneqq \sup \left\{ \left\vert \left\langle\xi,\rho(x^*)D\eta\right\rangle - \left\langle \rho(x)D\xi,\eta \right\rangle \right\vert : {\xi,\eta \in \Dom(D), \Vert \xi\Vert = \Vert \eta \Vert=1} \right\} .$$
\end{definition}
A first result says that $x\in \Lip_D(A)$ exactly when $L_D(x)$ is finite, and in this case $L_D(x)=\Vert \partial(x)\Vert$, see e.g. \cite[Lemma 2.3]{Kaad-Aguilar}. Moreover, $L_D : A \to [0,\infty]$ is lower semi-continuous and $*$-invariant, see \cite[Proposition 3.7]{Rieffel-2}.
The above construction does in general not yield a quantum metric space, but due to the work of Rieffel, there are tools available for verifying whether or not this is the case (see for instance \cite[Theorem 1.8]{Rieffel-1}).\\

Quantum analogues of the Gromov-Hausdorff distance have been defined by Rieffel and Latrémolière, and we refer the reader to \cite{Rieffel-distance,Latremoliere} for concrete definitions. For our purposes, it suffices to know that when the compact quantum metric spaces in question are of the form $(C(X),L_{d})$, then both analogues are dominated by the classical Gromov-Hausdorff distance, see Remark \ref{Rem:Dist}.
\\

\subsection{The Standard Podle\'s Sphere}
The central object of interest in this paper is the standard Podle\'s quantum sphere, which is defined as a particular $C^*$-subalgebra of Woronowicz' \cite{Woronowicz-SU2} quantum group $SU_q(2)$ as given below.
Fix $q\in(0,1]$, and let $SU_q(2)$ denote the universal unital $C^*$-algebra with generators $a$ and $b$ defined such that the following relations are satisfied:

\begin{align*}
ba=qab, \quad b^*a&=qab^*, \quad bb^* =b^*b
\\
a^*a+ q^2bb^* = 1& = aa^*+bb^* .
\end{align*}
We denote the unital $*$-subalgebra generated by $a$ and $b$ by $\mathcal{O}(SU_q(2))$, and
by $\mathcal{O}(S_q^2)$  the unital $*$-subalgebra of $\mathcal{O}(SU_q(2))$ generated by the elements
$$A\coloneqq b^*b \quad \text{and} \quad B\coloneqq ab^*.$$
The standard Podle\'s quantum sphere, $S_q^2$, is defined as the norm-closure of $\mathcal{O}(S^2_q)\subset SU_q(2)$ \cite{Podles}.
We remark that from the defining relations of $SU_q(2)$ we obtain a similar set of relations for $A$ and $B$:
\begin{align*}
AB=q^2BA, &\quad A=A^*
\\
BB^*=q^{-2}A(1-A), &\quad B^*B=A(1-q^2A).
\end{align*}
The $C^*$-algebra $SU_q(2)$ comes equipped with a natural faithful state, called the Haar state, which we denote by $h : SU_q(2) \to \CC$, see e.g. \cite[Section 11.3.2]{Klimyk}. We let $L^2(SU_q(2))$ denote the separable Hilbert space obtained by applying the GNS-construction to the $C^*$-algebra $SU_q(2)$ equipped with the Haar state.

From now on, we assume that $q \neq 1$. Define an automorphism $\partial_k$ on $\mathcal{O}(SU_q(2))$ by $\partial_k(x)=q^{\frac{1}{2}}x$ if $x\in\lbrace a,b\rbrace$, and $\partial_k(x)=q^{-\frac{1}{2}}x$ if $x\in\lbrace a^*,b^*\rbrace$, and for each $n\in \ZZ$, define the vector subspaces $$\mathcal{A}_n\coloneqq \lbrace x\in \mathcal{O}(SU_q(2)) : \partial_k(x)=q^{n/2}x\rbrace \subset \mathcal{O}(SU_q(2)).$$ It turns out that $\mathcal{A}_0 = \mathcal{O}(S_q^2)$ and that the algebra structure on $\mathcal{O}(SU_q(2))$ allows us to consider each $\mathcal{A}_n$ as a left module over $\mathcal{O}(S_q^2)$. We let $H_+$ and $H_-$ denote the separable Hilbert spaces obtained by taking the Hilbert space closures of $\mathcal{A}_1$ and $\mathcal{A}_{-1}$ (respectively) when considered as subspaces of $L^2(SU_q(2))$. The GNS-representation of $SU_q(2)$ on $L^2(SU_q(2))$ (when properly restricted) then provides us with two unital $*$-homomorphisms $\rho_+ : S_q^2 \to B(H_+)$ and $\rho_- : S_q^2 \to B(H_-)$. 

By \cite{Dabrowski-Sitarz} there exists an even spectral triple, $(S_q^2,H_+ \oplus H_-,D_q)$, where the representation in question is given by the direct sum $\rho : \rho_+ \oplus \rho_- : S_q^2 \to B(H_+ \oplus H_-)$. For an explicit construction of the Dirac operator $D_q : \Dom(D_q) \to H_+ \oplus H_-$, we refer to \cite{Dabrowski-Sitarz,Neshveyev-Tuset} or \cite{Kaad-Aguilar}.

For $x\in \Lip_{D_q}(S_q^2)$, the associated operator $\partial(x)$ (obtained as the closure of $[D_q,\rho(x)]$) takes the form
\[
\begin{pmatrix}
 0 & \partial_2(x)
 \\
 \partial_1(x) & 0 
 \end{pmatrix}\colon H_+\oplus H_-\to H_+\oplus H_-,
\]
where $\partial_1\colon \Lip_{D_q}(S_q^2)\to B(H_+,H_-)$ and $\partial_2\colon \Lip_{D_q}(S_q^2)\to B(H_-, H_+)$ are derivations satisfying $\partial_2(x^*)=-\partial_1(x)^*$ (remark in this respect that $B(H_+,H_-)$ and $B(H_-,H_+)$ can be considered as bimodules over $S_q^2$ via the representations $\rho_+$ and $\rho_-$). Consequently the Lip-norm is, for $x\in \Lip_{D_q}(S_q^2),$ given by
\[
L_{D_q}(x)= \max \left\lbrace\Vert \partial_1(x)\Vert, \Vert \partial_1(x^*)\Vert \right\rbrace.
\]
  
By \cite[Proposition 4]{Podles}, $S_q^2$ admits a faithful representation, $\pi\colon S_q^2 \to B(\ell^2(\NN_0))$, defined by
\begin{equation}
\label{Eq:PodRep} \pi(A)(e_k)\coloneqq q^{2k}e_k, \quad  \pi(B)(e_k)=q^k \sqrt{1-q^{2(k+1)}}e_{k+1},
\end{equation}
where $e_k$ denotes the characteristic function on the point-set $\lbrace	k\rbrace\subset\NN_0$. In fact, this representation even provides a $*$-isomorphism to the unitisation of the compact operators on $\ell^2(\NN_0)$.
Using this representation it is easy to see that the spectrum of the operator $A$ for a specific $q\in(0,1)$ is given by $$X_q:=\lbrace 0 \rbrace  \cup \lbrace q^{2k} : k \in \NN_0\rbrace.$$
Hence the indicator functions $\chi_{\lbrace q^{2k} \rbrace} \colon X_q\to \lbrace 0,1\rbrace$ are continuous for all $k$. In fact, these indicator functions and the unit generate $C(X_q)$, since any continuous function, $f \colon X_q \to \CC$, can be written as $f(0) + \sum_{k=0}^\infty  (f(q^{2k})-f(0)) \cdot \chi_{\lbrace q^{2k}\rbrace}$, where $\lim_{k \to \infty} f(q^{2k}) = f(0)$.
By \cite[Theorem 8.3]{Kaad-Aguilar}, $(S_q^2,L_{D_q})$ is a compact quantum metric space, and consequently so is $I_q:=C^*(A,1)\cong C(X_q)$ with the restricted Lip-norm. The compact quantum metric space $(I_q,L_{D_q})$ is our main object of interest in the present paper.  As $I_q$ is commutative, the Lip-norm $L_{D_q}$ defines a genuine metric $d_q$ on $X_q$ when the latter is considered as a subset of the state space $S(S_q^2)$. In order to describe $d_q$ explicitly, the following lemma will be key:
\begin{lemma}[{\cite[Lemma 5.3]{Kaad-Aguilar}}]
\label{Lemma:Char}
Let $k\in \NN_0$ and let $q \in (0,1)$. We have that $\chi_{\lbrace q^{2k}\rbrace}(A) \in \Lip_{D_q}(S_q^2)$ and the derivative is given by
\begin{align*}
\partial_1(\chi_{\lbrace q^{2k} \rbrace}(A))=
\frac{1}{q^{2k}(1-q^2)}\chi_{\lbrace q^{2k}\rbrace}(A)\cdot b^*a^*
-\frac{1}{q^{2(k-1)}(1-q^2)}\chi_{\lbrace q^{2(k-1)}\rbrace}(A)\cdot b^*a^*  
\end{align*}
In particular, we obtain that 
\begin{equation}\label{eq:qdiff}
\partial_1(f(A)) = \sum_{k = 0}^\infty \frac{f(q^{2k}) - f(q^{2(k+1)})}{q^{2k}(1-q^2)} \chi_{\lbrace q^{2k}\rbrace}(A)\cdot b^* a^* 
\end{equation}
for every $f \in \operatorname{span}_\CC\{\chi_{\lbrace q^{2k}\rbrace} : k\in \NN_0\}$.
\end{lemma}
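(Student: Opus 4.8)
The plan is to regard $\partial_1$ as a densely defined, closed derivation that is already pinned down on the self-adjoint generator $A$, and to propagate this to the spectral projections $\chi_{\lbrace q^{2k}\rbrace}(A)$ by holomorphic functional calculus. I would isolate two inputs at the outset. The first is the seed value $\partial_1(A)=b^*a^*$, meaning that $A\in\Lip_{D_q}(S_q^2)$ and that $\partial_1(A)\in B(H_+,H_-)$ is left multiplication by $b^*a^*\in\mathcal{A}_{-2}$ on the GNS module (note $\mathcal{A}_{-2}\cdot\mathcal{A}_1\subseteq\mathcal{A}_{-1}$, so such left multiplication indeed maps $H_+$ to $H_-$, which is also the meaning to be attached to the products $\chi_{\lbrace q^{2k}\rbrace}(A)\cdot b^*a^*$ in the statement). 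This value is extracted by a direct computation from the explicit Dąbrowski-Sitarz operator on $A=b^*b$, and is where the concrete spectral-triple data enters. The second input is the commutation relation, proved by an elementary manipulation of the defining relations ($bb^*=b^*b$, $ba^*=q^{-1}a^*b$, $a^*b^*=qb^*a^*$), that $A\,b^*a^*=q^{-2}b^*a^*\,A$; equivalently $b^*a^*\,g(A)=g(q^2A)\,b^*a^*$ for every continuous $g$ on $\sigma(A)$, so in particular $b^*a^*\,\chi_{\lbrace q^{2k}\rbrace}(A)=\chi_{\lbrace q^{2(k-1)}\rbrace}(A)\,b^*a^*$.

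Next I would treat membership and the computation in one stroke via a Riesz projection. Since $\sigma(A)=X_q$ and $q^{2k}$ is an isolated point of it, one has $\chi_{\lbrace q^{2k}\rbrace}(A)=\frac{1}{2\pi i}\oint_{\gamma_k}(z-A)^{-1}\,dz$ for a small circle $\gamma_k$ separating $q^{2k}$ from the rest of the spectrum. Because the Lipschitz algebra of a spectral triple is stable under holomorphic functional calculus, each resolvent $(z-A)^{-1}$ lies in $\Lip_{D_q}(S_q^2)$ for $z\notin\sigma(A)$; differentiating $(z-A)(z-A)^{-1}=1$ with the derivation rule then gives $\partial_1((z-A)^{-1})=(z-A)^{-1}\,b^*a^*\,(z-A)^{-1}$, where the left and right resolvents act through $\rho_-$ and $\rho_+$. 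Passing $\partial_1$ through the contour integral using its closedness yields $\partial_1(\chi_{\lbrace q^{2k}\rbrace}(A))=\frac{1}{2\pi i}\oint_{\gamma_k}(z-A)^{-1}\,b^*a^*\,(z-A)^{-1}\,dz$.

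The formula then falls out of a residue computation. Commuting $b^*a^*$ past the right-hand resolvent by the second input turns the integrand into $(z-A)^{-1}(z-q^2A)^{-1}\,b^*a^*$, and expanding the two commuting diagonal operators as $(z-A)^{-1}=\sum_j(z-q^{2j})^{-1}\chi_{\lbrace q^{2j}\rbrace}(A)$ and $(z-q^2A)^{-1}=\sum_j(z-q^{2(j+1)})^{-1}\chi_{\lbrace q^{2j}\rbrace}(A)$ shows that only the summands $j=k$ and $j=k-1$ contribute a pole inside $\gamma_k$. Their residues at $z=q^{2k}$ are $\frac{1}{q^{2k}(1-q^2)}\chi_{\lbrace q^{2k}\rbrace}(A)$ and $-\frac{1}{q^{2(k-1)}(1-q^2)}\chi_{\lbrace q^{2(k-1)}\rbrace}(A)$, which after multiplying by $b^*a^*$ is exactly the claimed two-term expression; the case $k=0$ is automatic since $\chi_{\lbrace q^{-2}\rbrace}(A)=0$. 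The concluding identity \eqref{eq:qdiff} is then bookkeeping: extending $\partial_1$ linearly over finite spans of the $\chi_{\lbrace q^{2k}\rbrace}(A)$ and collecting the coefficient of $\chi_{\lbrace q^{2k}\rbrace}(A)\cdot b^*a^*$ telescopes the two-term expressions into the single $q$-difference quotient $\tfrac{f(q^{2k})-f(q^{2(k+1)})}{q^{2k}(1-q^2)}$.

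The main obstacle I anticipate is analytic rather than algebraic: one must justify that $(z-A)^{-1}$ really lies in the domain of $\partial_1$ (i.e.\ that $\Lip_{D_q}(S_q^2)$ is holomorphically closed) and that $\partial_1$ commutes with the contour integral, so that the formal step $\partial_1\oint=\oint\partial_1$ is licit; this, together with the seed computation of $\partial_1(A)$ from the Dirac operator, is where the weight of the proof lies, the residue calculation being routine. If one prefers to avoid complex analysis altogether, there is a purely algebraic variant: once $\chi_{\lbrace q^{2k}\rbrace}(A)\in\Lip_{D_q}(S_q^2)$ is known, applying the derivation rule to the two relations $A\,\chi_{\lbrace q^{2k}\rbrace}(A)=q^{2k}\chi_{\lbrace q^{2k}\rbrace}(A)=\chi_{\lbrace q^{2k}\rbrace}(A)\,A$ and using that each $\chi_{\lbrace q^{2j}\rbrace}(A)\cdot b^*a^*$ is an eigenvector of left multiplication by $A$ with eigenvalue $q^{2j}$ produces two difference equations that determine the two unknown coefficients uniquely, reproducing the same formula.
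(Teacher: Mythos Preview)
The paper does not actually prove this lemma: it is quoted verbatim from \cite[Lemma~5.3]{Kaad-Aguilar}, and the displayed identity \eqref{eq:qdiff} is an immediate linear-combination consequence recorded here for later use. So there is no in-paper argument to compare against.

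That said, your proposal is a correct and self-contained route to the result. The two inputs you isolate are exactly the right ones: the seed value $\partial_1(A)=b^*a^*$ (which is indeed what one extracts from the explicit D\k{a}browski--Sitarz operator, and is consistent with \eqref{eq:qdiff} applied to $f=\id$), and the intertwining relation $b^*a^*\,g(A)=g(q^2A)\,b^*a^*$, whose derivation from the defining relations you have right. The Riesz-projection computation then goes through as you describe; holomorphic stability of $\Lip_{D_q}(S_q^2)$ and closedness of $\partial_1$ are the standard facts that license passing the derivation under the contour integral, and your residue bookkeeping is correct, including the harmless $k=0$ boundary case.

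The original proof in \cite{Kaad-Aguilar} proceeds somewhat differently: there the derivations $\partial_1,\partial_2$ are realised concretely via the twisted $q$-derivations $\partial_e,\partial_f$ coming from the $U_q(\mathfrak{su}(2))$-action, and the value of $\partial_1$ on each spectral projection is obtained by writing $\chi_{\{q^{2k}\}}(A)$ as an explicit norm-convergent polynomial series in $A$ and differentiating term by term. Your holomorphic-functional-calculus argument is cleaner in that it bypasses any explicit series manipulation and makes transparent why exactly two spectral projections appear (two simple poles inside $\gamma_k$); the price is that the analytic justification you flag---that $\partial_1$ commutes with $\oint_{\gamma_k}$---must be supplied, but this is routine given norm-continuity of $z\mapsto (z-A)^{-1}\,b^*a^*\,(z-A)^{-1}$ on $\gamma_k$ and closedness of the derivation. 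Your alternative purely algebraic variant (differentiating $A\chi_{\{q^{2k}\}}(A)=q^{2k}\chi_{\{q^{2k}\}}(A)$) is also valid once membership in $\Lip_{D_q}(S_q^2)$ is known, and gives perhaps the shortest derivation of the coefficients.
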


\begin{remark}
The formula in \eqref{eq:qdiff} for $\partial_1(f(A))$ is related to the notion of $q$-differentiation from $q$-calculus. Indeed, the $q^2$-differentiation of $f \in \operatorname{span}_\CC\{\chi_{\lbrace q^{2k}\rbrace} : k\in \NN_0\}$ would be given by
\[
\mathcal{D}_{q^2}(f) = \sum_{k = 0}^\infty \frac{f(q^{2k}) - f(q^{2(k+1)})}{q^{2k}(1-q^2)} \chi_{\lbrace q^{2k}\rbrace} ,
\]
see for example \cite[Chapter 2.2]{Klimyk}. The extra term $b^* a^*$ appearing in \eqref{eq:qdiff} comes from the geometry of the quantised $2$-sphere as it operates between the Hilbert space completions $H_+$ and $H_-$ of the quantised spinor bundles $\mathcal{A}_1$ and $\mathcal{A}_{-1}$.
\end{remark}


\section{Metric Properties of the Quantised Interval}
In this section we first provide the explicit descriptions of the compact metric spaces $(X_q,d_q)$ which encode the compact quantum metric space structure of $(I_q,L_{D_q})$. More precisely, the algebra of Lipschitz functions of the metric space $(X_q,d_q)$ must agree with the Lipschitz algebra $\Lip_{D_q}(S_q^2)\cap I_q$  and the two semi-norms must agree, in the  sense that $L_{D_q}(f(A))=L_{d_q}(f)$ whenever $f$ is a Lipschitz function on $(X_q,d_q)$. This analysis is separated into the case $q=1$, referred to as the continuum case, and the case $q<1$, referred to as the quantised case.

\subsection{The continuum case}\label{subsec:commutative}
We consider the $2$-sphere $S^2 = \{ (x_1,x_2,x_3) \in \RR^3 : x_1^2 + x_2^2 + x_3^2 = 1 \}$ whereas $S^3 = \{ (z,w) \in \CC^2 : |z|^2 + |w|^2 = 1 \}$ both equipped with the subspace topology coming from the usual topology on $\RR^3$ and $\CC^2$.

In the situation where $q = 1$ we have a homeomorphism between the characters of $SU_q(2)$ and the $3$-sphere $S^3$, which sends $(z,w)\in S^3\subset \CC^2$ to the unique character $\chi_{z,w}$ satisfying that $\chi_{z,w}(a)=z$ and $\chi_{z,w}(b)=w$ (see \cite{Woronowicz-SU2}). Consequently, we can identify $SU_q(2)$ with $C(S^3)$ such that $a(z,w)= z$ and $b(z,w) = w$. We may moreover view the $2$-sphere $S^2$ as the quotient space of $S^3$ under the circle action $\lambda \cdot (z,w) := (\lambda \cdot z,\lambda \cdot w)$ and this identification happens via the Hopf-fibration
\[
S^3 \ni (z,w)\longmapsto \left(2\Re(z\bar{w}),2\Im(z\bar{w}),|z|^2-|w|^2\right)\in S^2.
\]
Since both $A(z,w) = (b^* b)(w) = |w|^2$ and $B(z,w) = z \bar{w}$ are invariant under the circle action we may consider them as continuous function on $S^2$ and as such they are given by
\[
A(x_1,x_2,x_3) = \frac{1 - x_3}{2} \qquad \mbox{and} \qquad B(x_1,x_2,x_3) = \frac{x_1 + i x_2}{2} .
\]
It is now clear that $A$ has range $[0,1]$ and so we have a $*$-isomorphism $C([0,1])\cong I_1$. Let $d_1$ be the metric on $[0,1]$ obtained from the standard round metric on $S^2$ so that
\[
d_1(s,t) := \inf\big\{ d_{S^2}\big( (x_1,x_2, 1-2s), (y_1,y_2, 1-2t) \big) : x_1^2 + x_2^2 + (1-2s)^2 = 1 = y_1^2 + y_2^2 + (1-2t)^2 \big\}
\]
for all $s,t \in [0,1]$. We record the following elementary result:  
\begin{proposition}
 \label{Prop:comm}
The map $\phi\colon [-\tfrac{\pi}{2},\tfrac{\pi}{2}]\to[0,1]$ given by $\phi(t)=\tfrac{1}{2} + \tfrac{1}{2}\sin(t)$ is an isometric isomorphism when $[-\tfrac{\pi}{2},\tfrac{\pi}{2}]$ is equipped with the standard Euclidean metric $d$ and $[0,1]$ is equipped with the metric $d_1$. In particular, we have a $*$-isomorphism $\beta\colon C([-\tfrac{\pi}{2},\tfrac{\pi}{2}])\to I_1$, $\beta(f) = (f\circ \phi^{-1})(A)$, which maps $C_{\Lip}([-\tfrac{\pi}{2},\tfrac{\pi}{2}])$ onto $I_1\cap C_{\Lip}(S^2)$ and satisfies $L_{d_{S^2}}(\beta(f))=L_{d}(f)$.
\end{proposition}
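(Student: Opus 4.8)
The plan is to first turn the definition of $d_1$ into a fully explicit formula by carrying out the minimisation over the azimuthal freedom on $S^2$, and then to read off from that formula both that $\phi$ is an isometry and that the two Lipschitz structures agree. For the explicit formula, recall that the round distance between unit vectors $p,p'\in S^2$ is $d_{S^2}(p,p')=\arccos\langle p,p'\rangle$. Fixing the heights $x_3=1-2s$ and $y_3=1-2t$ forces $x_1^2+x_2^2=1-(1-2s)^2$ and $y_1^2+y_2^2=1-(1-2t)^2$, and since $\arccos$ is decreasing, computing $d_1(s,t)$ amounts to maximising $x_1y_1+x_2y_2$ under these constraints. By Cauchy--Schwarz this maximum equals $\sqrt{(1-(1-2s)^2)(1-(1-2t)^2)}$ and is attained (take the two horizontal vectors positively proportional). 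Writing $\theta(s):=\arccos(1-2s)\in[0,\pi]$, so that $1-2s=\cos\theta(s)$ and $\sqrt{1-(1-2s)^2}=\sin\theta(s)$, the maximal inner product becomes $\cos\theta(s)\cos\theta(t)+\sin\theta(s)\sin\theta(t)=\cos(\theta(s)-\theta(t))$, whence
\[
d_1(s,t)=\bigl\lvert\theta(s)-\theta(t)\bigr\rvert=\bigl\lvert\arccos(1-2s)-\arccos(1-2t)\bigr\rvert ,
\]
using that $\theta(s)-\theta(t)\in[-\pi,\pi]$.

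With this formula, showing that $\phi$ is an isometry is immediate: for $u\in[-\tfrac{\pi}{2},\tfrac{\pi}{2}]$ one computes $1-2\phi(u)=-\sin u=\cos(u+\tfrac{\pi}{2})$ with $u+\tfrac{\pi}{2}\in[0,\pi]$, so $\theta(\phi(u))=u+\tfrac{\pi}{2}$, and therefore
\[
d_1(\phi(u),\phi(v))=\bigl\lvert(u+\tfrac{\pi}{2})-(v+\tfrac{\pi}{2})\bigr\rvert=\lvert u-v\rvert=d(u,v).
\]
As $\phi$ is moreover a continuous strictly increasing bijection, it is a bijective isometry, i.e.\ an isometric isomorphism of metric spaces.

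For the algebraic part, observe that $I_1=C^*(A,1)$ consists exactly of the elements $g(A)=g\circ A$ with $g\in C([0,1])$, so continuous functional calculus $g\mapsto g(A)$ is a $*$-isomorphism $C([0,1])\to I_1$; precomposing with the homeomorphism $\phi$ yields the $*$-isomorphism $\beta(f)=(f\circ\phi^{-1})(A)$. The remaining point is to match seminorms, for which the key identity is
\[
L_{d_{S^2}}(g\circ A)=L_{d_1}(g)\qquad\text{for all }g\in C([0,1]).
\]
The inequality ``$\leq$'' holds because for $p\neq p'$ with $A(p)=A(p')$ the function $g\circ A$ agrees at $p,p'$, while for $A(p)\neq A(p')$ one has $d_1(A(p),A(p'))\leq d_{S^2}(p,p')$ and hence the difference quotient is bounded by $L_{d_1}(g)$; the inequality ``$\geq$'' holds because, by Step~1, the infimum defining $d_1(s,t)$ is attained by a pair $p,p'\in S^2$ with $A(p)=s$, $A(p')=t$ and $d_{S^2}(p,p')=d_1(s,t)$, so every $d_1$-difference quotient is realised on $S^2$. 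Combining this with $L_{d_1}(f\circ\phi^{-1})=L_d(f)$ (valid since $\phi$ is an isometry) gives $L_{d_{S^2}}(\beta(f))=L_d(f)$, and the same two equivalences show that $f$ is $d$-Lipschitz if and only if $\beta(f)\in I_1\cap C_{\Lip}(S^2)$, establishing that $\beta$ maps $C_{\Lip}([-\tfrac{\pi}{2},\tfrac{\pi}{2}])$ onto $I_1\cap C_{\Lip}(S^2)$.

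The only genuinely geometric input is Step~1, and the point requiring care is the passage in the seminorm identity between the intrinsic round metric $d_{S^2}$ and the infimum metric $d_1$: one must verify that the infimum defining $d_1(s,t)$ is an honest minimum, so that the ``$\geq$'' inequality is exact rather than only up to $\varepsilon$, and that the fibres of $A$ contribute nothing to $L_{d_{S^2}}(g\circ A)$ since $g\circ A$ is constant along them. Both are delivered by the explicit minimiser found in Step~1, so I do not expect any obstacle beyond this bookkeeping.
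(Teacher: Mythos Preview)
Your proof is correct. The paper does not actually supply a proof of this proposition---it is introduced with the phrase ``We record the following elementary result'' and then stated without argument---so there is no paper proof to compare against; your computation via $\theta(s)=\arccos(1-2s)$ and the Cauchy--Schwarz maximisation of the azimuthal inner product is precisely the kind of elementary verification the authors leave to the reader, and your treatment of the Lipschitz seminorm identity $L_{d_{S^2}}(g\circ A)=L_{d_1}(g)$ via attainment of the infimum is clean and complete.
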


\begin{remark}
For completeness, we note that when $q=1$, the standard Podle\'s sphere is of course isomorphic to $C(S^2)$. Indeed, the continuous maps corresponding to $A$ and $B$ separate points in $S^2$ and the Stone-Weierstrass Theorem then shows that $S_1^2=C^*(1,A,B) \cong C(S^2)$.
\end{remark}

\subsection{The quantised case}
We will now address the case of a fixed $q\in (0,1)$. We let $X_q$ denote the spectrum of $A\in S_q^2$, and, as we already saw,  $X_q=\{0\}\cup \{q^{2k} : k\in \NN_0\}$. As explained in the introduction, the Lip-norm $L_{D_q}$ gives rise to a metric on the state space of $C^*(A,1)\cong C(X_q)$, which therefore, in particular, determines a metric $d_q$ on $X_q$ when the latter is  viewed as a subset of the state space via point evaluations.
The aim of the current section is to find an explicit formula for this metric, and show that the metric spaces $(X_q,d_q)$ converge in the Gromov-Hausdorff distance to the Euclidean interval $[-\tfrac{\pi}{2},\tfrac{\pi}{2}]$ as $q$ tends to $1$. 

To this end, we consider the function $\rho_q\colon [-1,\infty) \to \RR$ by
\[
\rho_q(x)\coloneqq \frac{\sqrt{1-q^{2(x+1)}}}{(1-q^2)q^x} .
\]

\begin{definition}\label{d:defmetric} Define the metric $d_q\colon X_q\times X_q \to [0,\infty)$ by
\begin{align*}
d_q(x,y)\coloneqq
\begin{cases} \hspace{0.4cm }0 &\mbox{if } x=y \\
 \displaystyle\sum_{\min\lbrace m,n \rbrace}^{\max\lbrace m,n \rbrace-1} \frac{1}{\rho_q(k)} & \mbox{if } x=q^{2n} \text{ and } y=q^{2m}  \text{ with } n\neq m \\
 \displaystyle\sum_{k=n}^\infty\frac{1}{\rho_q(k)} & \text{ if } x=q^{2n} \text{ and } y=0 \text{ or } x=0 \text{ and } y=q^{2n}. 
  \end{cases}
\end{align*}
\end{definition}
Remark that the series $\sum_{k=0}^\infty \frac{1}{\rho_q(k)}$ is convergent as can be seen from the estimate 
\begin{equation}\label{eq:estrho}
\frac{1}{\rho_q(k)} = \frac{q^k (1-q^2)}{\sqrt{1 - q^{2(k+1)}}} \leq q^k  \qquad \mbox{for all } k \in \NN_0 . 
\end{equation}
%

In order to prove Theorem \ref{Theorem:Lip-Lip}, we need several lemmas, the first of which shows that the Lipschitz semi-norm on $C(X_q)$ defined by the metric $d_q$ and the Lip-norm $L_{D_{q}}$ on $I_q$ agree on all finite linear combinations of characteristic functions on $X_q$:

\begin{lemma}\label{Lemma:Linearcomb}
For any $f\in \operatorname{span}_\CC\{\chi_{\lbrace q^{2k}\rbrace} : k\in \NN_0\} \subset C(X_q)$, it holds that $f(A) \in \Lip_{D_q}(S_q^2) \cap I_q$. Moreover, we have the identities
\[
L_{D_q}(f(A))= \max\lbrace \rho_q(k) \cdot \vert f(q^{2k})-f(q^{2(k+1)}) \vert : k \in \NN_0 \rbrace = L_{d_q}(f).
\]
In particular, $f$ is also Lipschitz with respect to the metric $d_q$.
\end{lemma}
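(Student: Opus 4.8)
The statement splits into two essentially independent equalities: the \emph{analytic} identity $L_{D_q}(f(A)) = \max_k \rho_q(k)\,|f(q^{2k}) - f(q^{2(k+1)})|$, to be computed inside the spectral triple, and the \emph{metric} identity $\max_k \rho_q(k)\,|f(q^{2k}) - f(q^{2(k+1)})| = L_{d_q}(f)$, which is a purely classical statement about the metric space $(X_q,d_q)$. The membership $f(A) \in \Lip_{D_q}(S_q^2) \cap I_q$ is immediate: $f$ is a \emph{finite} linear combination of the $\chi_{\{q^{2k}\}}$, each of which lies in $\Lip_{D_q}(S_q^2)$ by Lemma \ref{Lemma:Char}, and $\Lip_{D_q}(S_q^2)$ is a $*$-algebra; clearly $f(A) \in C^*(A,1) = I_q$.

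For the analytic identity I would use the formula $L_{D_q}(f(A)) = \max\{\|\partial_1(f(A))\|, \|\partial_1(f(A)^*)\|\}$ recorded in the preliminaries, together with the expression for $\partial_1(f(A))$ from \eqref{eq:qdiff}. Writing $\partial_1(f(A)) = \rho_-(g(A))\,T$, where $g = \sum_k c_k \chi_{\{q^{2k}\}}$ with $c_k = (f(q^{2k}) - f(q^{2(k+1)}))/(q^{2k}(1-q^2))$ and $T \in B(H_+, H_-)$ is the operator implementing left multiplication by $b^*a^*$, the plan is to compute $\partial_1(f(A))\,\partial_1(f(A))^*$ and exploit the $C^*$-identity $\|\partial_1(f(A))\|^2 = \|\partial_1(f(A))\partial_1(f(A))^*\|$. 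The key point is that $T^*$ is implemented by left multiplication by $(b^*a^*)^* = ab$, which follows directly from the defining property of the Haar inner product, $\langle \eta, b^*a^*\xi\rangle = h(\eta^* b^* a^* \xi) = \langle ab\,\eta, \xi\rangle$; hence $TT^* = \rho_-(b^*a^*ab)$, and the commutation relations give $b^*a^*ab = b^*(1-q^2bb^*)b = A(1-q^2A)$. Since all three factors of $\rho_-(g(A))\,TT^*\,\rho_-(\bar g(A))$ are then functions of $A$, they commute, and using $\chi_{\{q^{2k}\}}(A)\chi_{\{q^{2l}\}}(A) = \delta_{kl}\,\chi_{\{q^{2k}\}}(A)$ together with the elementary identity $|c_k|^2 q^{2k}(1-q^{2(k+1)}) = \rho_q(k)^2 |f(q^{2k}) - f(q^{2(k+1)})|^2$, one obtains
\[
\partial_1(f(A))\,\partial_1(f(A))^* = \rho_-\Big( \sum_k \rho_q(k)^2\,|f(q^{2k}) - f(q^{2(k+1)})|^2\, \chi_{\{q^{2k}\}}(A) \Big).
\]
As this is a positive "diagonal" element and $\rho_-$ is faithful on $I_q$ (equivalently, every $\chi_{\{q^{2k}\}}(A)$ is a nonzero projection, since $\operatorname{spec}(A) = X_q$), its norm equals the maximum of the coefficients, so $\|\partial_1(f(A))\| = \max_k \rho_q(k)\,|f(q^{2k}) - f(q^{2(k+1)})|$. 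Finally, since $f(A)^* = \bar f(A)$ and $|\overline{f(q^{2k})} - \overline{f(q^{2(k+1)})}| = |f(q^{2k}) - f(q^{2(k+1)})|$, the same computation yields the identical value for $\|\partial_1(f(A)^*)\|$, and the analytic identity follows.

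For the metric identity, the observation is that $d_q$ is the shortest-path metric of the broken line whose vertices are the points of $X_q$ in their natural order $1 > q^2 > q^4 > \cdots > 0$, with consecutive gaps $d_q(q^{2k}, q^{2(k+1)}) = 1/\rho_q(k)$. Concretely, the map $\psi\colon X_q \to \RR$ sending $q^{2n} \mapsto t_n := \sum_{k=n}^\infty 1/\rho_q(k)$ and $0 \mapsto 0$ is an isometric embedding of $(X_q, d_q)$ onto the compact set $\{0\}\cup\{t_n : n\in\NN_0\} \subset \RR$, with finiteness of $t_0$ guaranteed by \eqref{eq:estrho}. Under $\psi$, the quantity $L_{d_q}(f)$ becomes the Euclidean Lipschitz constant of $g := f\circ \psi^{-1}$ on this ordered set, and the plan is to invoke the elementary mediant inequality: for $n < m$ one telescopes $f(q^{2n}) - f(q^{2m}) = \sum_{k=n}^{m-1} (f(q^{2k}) - f(q^{2(k+1)}))$ over the corresponding subdivision $t_n - t_m = \sum_{k=n}^{m-1} 1/\rho_q(k)$ of the gap, and likewise for a pair $(q^{2n}, 0)$ using $f(q^{2k}) \to f(0)$; since a weighted average of slopes never exceeds the largest slope, the supremum defining $L_{d_q}(f)$ is attained on consecutive pairs. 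This gives $L_{d_q}(f) = \sup_k \rho_q(k)\,|f(q^{2k}) - f(q^{2(k+1)})|$, which is a maximum over the finitely many nonvanishing terms; in particular $L_{d_q}(f) < \infty$, so $f$ is Lipschitz.

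I expect the main obstacle to be the analytic identity, specifically pinning down that $T^*$ is implemented by left multiplication by $ab$ (legitimising the symbol computation $TT^* = \rho_-(A(1-q^2A))$) and justifying that the norm of the resulting diagonal operator is the maximum of its coefficients, i.e. that the relevant spectral projections do not vanish in $\rho_-$. The metric identity is routine once the isometric identification of $(X_q,d_q)$ with an ordered subset of $\RR$ is in place.
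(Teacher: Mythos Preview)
Your proposal is correct and follows essentially the same route as the paper: for the analytic identity you compute $\partial_1(f(A))\partial_1(f(A))^*$ via the relation $b^*a^*ab = A(1-q^2A)$ and read off the norm as the maximum coefficient, and for the metric identity you telescope over consecutive gaps—exactly as the paper does (the paper phrases the telescoping by fixing an index $l$ achieving the maximum rather than invoking the mediant inequality, but the content is identical). Your explicit identification of $T^*$ as left multiplication by $ab$ and the isometric embedding $\psi$ into $\RR$ are slightly more detailed than the paper's presentation, but there is no substantive difference in strategy.
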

Note that the maximum is indeed well-defined, since $f$ is non-zero at no more than finitely many elements from $X_q$.
\begin{proof}
Let $f\in \operatorname{span}_\CC\{\chi_{\lbrace q^{2k}\rbrace} : k\in \NN_0\}$ be given. The fact that $f(A) \in \Lip_{D_q}(S_q^2) \cap I_q$ is a consequence of Lemma \ref{Lemma:Char}. Moreover, from Lemma \ref{Lemma:Char} and the defining identities for $SU_q(2)$ we obtain that
\[
\begin{split}
\partial_1( f(A))\partial_1(f(A))^*
& = A(1-q^2A) \sum_{k = 0}^\infty \frac{\vert f(q^{2k}) - f(q^{2(k+1)}) \vert^2}{ q^{4k}(1 - q^2)^2} \chi_{\lbrace q^{2k}\rbrace}(A) \\
& = \sum_{k = 0}^\infty \rho_q(k)^2 \cdot \vert f(q^{2k}) - f(q^{2(k+1)}) \vert^2 \chi_{\lbrace q^{2k}\rbrace}(A) .
\end{split}
\]
The continuous functional calculus applied to $A \in I_q$ then implies that
\begin{equation}\label{eq:paone}
\big\| \partial_1(f(A)) \big\|^2
= \max\lbrace \rho_q(k) \cdot \vert f(q^{2k})-f(q^{2(k+1)}) \vert : k \in \NN_0 \rbrace .
\end{equation}
The identity
\[
L_{D_q}(f(A)) = \max\lbrace \rho_q(k) \cdot \vert f(q^{2k})-f(q^{2(k+1)}) \vert : k \in \NN_0 \rbrace
\]
now follows since the formula in \eqref{eq:paone} implies that $\| \partial_2(f(A)) \| = \| \partial_1(\overline{f}(A)) \| = \| \partial_1(f(A)) \|$.

For the second equality, choose $l\in \NN_0$ such that 
\[
\rho_q(l) \cdot \vert f(q^{2l})- f(q^{2(l+1)}) \vert 
= \max \lbrace \rho_q(k) \cdot \vert f(q^{2k})-f(q^{2(k+1)}) \vert : k \in \NN_0 \rbrace .
\]
This choice of $l \in \NN_0$ implies that
\[
\vert f(q^{2k})-f(q^{2(k+1)}) \vert \leq \vert f(q^{2l})- f(q^{2(l+1)}) \vert \cdot \frac{ \rho_q(l) }{\rho_q(k)}
\]
for all $k \in \NN_0$. Thus, for every $m<n$ we may now estimate as follows:
\begin{equation}\label{eq:finest}
\begin{split}
\vert f(q^{2m}) - f(q^{2n}) \vert & \leq \sum_{k = m}^{n-1} \vert f(q^{2k}) - f(q^{2(k+1)}) \vert  
\leq \sum_{k = m}^{n-1} \vert f(q^{2l}) - f(q^{2(l+1)}) \vert \cdot \frac{\rho_q(l)}{\rho_q(k)} \\
& = \vert f(q^{2l}) - f(q^{2(l+1)}) \vert \cdot \rho_q(l) \cdot d_q( q^{2m},q^{2n}) .
\end{split}
\end{equation}
This shows that $f : X_q \to \CC$ is Lipschitz with $L_{d_q}(f) \leq L_{D_q}(f(A))$. The fact that equality is achieved is then a consequence of Definition \ref{d:defmetric}. Indeed, we obtain that 
\[
L_{D_q}(f(A)) = \vert f(q^{2l}) - f(q^{2(l+1)}) \vert \cdot \rho_q(l) 
= \frac{\vert f(q^{2l}) - f(q^{2(l+1)}) \vert}{d_q( q^{2l},q^{2(l+1)}) } \leq L_{d_q}(f) .
\qedhere
\] 
\end{proof}

The next lemma computes the Lipschitz semi-norms of general continuous functions on $X_q$ and provides information on the behaviour of the Lipschitz constants of a particularly interesting approximation.

\begin{lemma}\label{Lemma:Subseq}
For any $f\in C(X_q)$ one has 
\[
L_{d_q}(f)=\sup \lbrace \vert f(q^{2k})-f(q^{2(k+1)})\vert \cdot \rho_q(k) : k \in \NN_0 \rbrace.
\]
Moreover, if $f(0) = 0$ and $f$ is Lipschitz with respect to the metric $d_q$, then the sequence $\big\{ L_{d_q}(f \cdot \chi_{\{q^{2k} : k\leq n\}}) \big\}_{n = 0}^\infty$ is bounded.
\begin{proof}

We first notice that Definition \ref{d:defmetric} implies the inequality
\[
\sup \lbrace \vert f(q^{2k})-f(q^{2(k+1)})\vert \cdot \rho_q(k) : k \in \NN_0 \rbrace \leq L_{d_q}(f)
\]
(see also the proof of Lemma \ref{Lemma:Linearcomb} for more details).

We then claim that
\begin{equation}\label{eq:supest}
\frac{\vert f(x)-f(y) \vert }{d_q(x,y)}\leq 
\sup \lbrace \vert f(q^{2k}) -f(q^{2(k+1)}) \vert \cdot \rho_q(k) : k \in \NN_0  \rbrace
\end{equation}
whenever $x,y \in X_q \setminus \{0\}$ satisfy $x \neq y$. We have to be careful at this point since the inequality in \eqref{eq:supest} is not an immediate consequence of Definition \ref{d:defmetric} \, : the right hand side of our inequality only uses successive elements as exponents (i.e. $k$ and $k+1$) whereas $x = q^{2n}$ and $y = q^{2m}$ for some $n,m \in \NN_0$ without any further constraints (except for $n \neq m$). The inequality in \eqref{eq:supest} does however follow by an application of Lemma \ref{Lemma:Linearcomb} to a suitable restriction of $f$.

Thus, to establish the claimed identity, it only remains to be shown that the supremum in \eqref{eq:supest} is still an upper bound when $x=q^{2n}$ for some $n\in \NN_0$ and $y=0$. However, this follows immediately from the estimate in \eqref{eq:supest} together with continuity of the function $f$ and the metric $d_q$.

For the second part, we assume that $f$ is Lipschitz and that $f(0) = 0$. By Lemma \ref{Lemma:Linearcomb} it suffices to show that the sequence $\lbrace\vert f(q^{2n}) \vert \cdot \rho_q(n) \rbrace_{n = 0}^\infty$ is bounded. To this end, we first note that since $f$ is Lipschitz we may find a constant $C$ such that $\vert f(q^{2n}) \vert \leq C\cdot d_q(q^{2n},0)$ for all $n\in \NN_0$. It follows that
\[
\rho_q(n) \cdot \vert f(q^{2n}) \vert \leq C\cdot \sum_{k=n}^\infty \frac{\rho_q(n)}{\rho_q(k)}= C \cdot \sum_{k=0}^\infty q^k\frac{ \sqrt{1-q^{2(n+1)}}}{\sqrt{1-q^{2(k+n+1)}}} \leq C \cdot \sum_{k=0}^\infty q^k = \frac{C}{1-q}
\]
for all $n\in \NN_0$. This ends the proof of the lemma. \end{proof} 
\end{lemma}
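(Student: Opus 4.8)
The plan is to reduce both assertions to Lemma \ref{Lemma:Linearcomb} by exploiting the fact that, along the natural ordering of $X_q\setminus\{0\}$, the metric $d_q$ is additive: by Definition \ref{d:defmetric} one has $d_q(q^{2k},q^{2(k+1)})=1/\rho_q(k)$ and, for $m<n$, $d_q(q^{2m},q^{2n})=\sum_{k=m}^{n-1} d_q(q^{2k},q^{2(k+1)})$. For the claimed formula, write $S$ for the supremum on the right-hand side. The inequality $S\leq L_{d_q}(f)$ is immediate, since each consecutive pair $(q^{2k},q^{2(k+1)})$ contributes the ratio $|f(q^{2k})-f(q^{2(k+1)})|\cdot\rho_q(k)$ to the supremum defining $L_{d_q}(f)$. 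For the reverse inequality I would first treat $x=q^{2m}$ and $y=q^{2n}$ with both nonzero and $m<n$: from $|f(q^{2k})-f(q^{2(k+1)})|\leq S/\rho_q(k)=S\cdot d_q(q^{2k},q^{2(k+1)})$ and the additivity above, a telescoping exactly as in \eqref{eq:finest} yields $|f(x)-f(y)|\leq S\cdot d_q(x,y)$. (Equivalently one applies Lemma \ref{Lemma:Linearcomb} to the finitely-supported restriction $f\cdot\chi_{\{q^{2k}:m\leq k\leq n\}}$.)

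It then remains to pass to pairs with $y=0$, which is where I expect the only genuine subtlety of the first part to lie. Here I would use continuity of $f$, so that $f(0)=\lim_{n\to\infty}f(q^{2n})$, together with the elementary fact that $d_q(q^{2n},q^{2m})=\sum_{k=n}^{m-1}1/\rho_q(k)\to\sum_{k=n}^{\infty}1/\rho_q(k)=d_q(q^{2n},0)$ as $m\to\infty$ for fixed $n$. Letting $m\to\infty$ in $|f(q^{2n})-f(q^{2m})|\leq S\cdot d_q(q^{2n},q^{2m})$ gives $|f(q^{2n})-f(0)|\leq S\cdot d_q(q^{2n},0)$, so that $L_{d_q}(f)\leq S$ and the identity follows.

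For the boundedness statement I would set $f_n:=f\cdot\chi_{\{q^{2k}:k\leq n\}}$, which lies in the span of characteristic functions, so that Lemma \ref{Lemma:Linearcomb} applies. Since $f_n(q^{2k})=f(q^{2k})$ for $k\leq n$ and $f_n$ vanishes elsewhere and at $0$, the consecutive differences of $f_n$ are $f(q^{2k})-f(q^{2(k+1)})$ for $k<n$, equal to $f(q^{2n})$ for $k=n$, and zero for $k>n$; hence
\[
L_{d_q}(f_n)=\max\Big(\{|f(q^{2k})-f(q^{2(k+1)})|\cdot\rho_q(k):0\leq k<n\}\cup\{|f(q^{2n})|\cdot\rho_q(n)\}\Big).
\]
The first set of terms is bounded by $L_{d_q}(f)$ thanks to the identity just established, so everything hinges on controlling the ``boundary'' term $|f(q^{2n})|\cdot\rho_q(n)$ uniformly in $n$. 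This is the main obstacle, and it is exactly where the Lipschitz hypothesis and the geometric decay from \eqref{eq:estrho} enter: using $f(0)=0$ we get $|f(q^{2n})|\leq L_{d_q}(f)\cdot d_q(q^{2n},0)=L_{d_q}(f)\sum_{k=n}^{\infty}1/\rho_q(k)$, whence $|f(q^{2n})|\cdot\rho_q(n)\leq L_{d_q}(f)\sum_{k=n}^{\infty}\rho_q(n)/\rho_q(k)$. The key estimate is then $\rho_q(n)/\rho_q(k)=q^{k-n}\sqrt{(1-q^{2(n+1)})/(1-q^{2(k+1)})}\leq q^{k-n}$ for $k\geq n$ (the square-root factor being at most $1$ since $q<1$), so that the tail sum is dominated by the geometric series $\sum_{j=0}^{\infty}q^{j}=1/(1-q)$. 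This gives $L_{d_q}(f_n)\leq L_{d_q}(f)/(1-q)$ for all $n$, proving boundedness.
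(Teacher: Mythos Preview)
Your proof is correct and follows essentially the same approach as the paper's: the telescoping/additivity argument for nonzero pairs (invoking Lemma~\ref{Lemma:Linearcomb} on a finitely supported restriction), the continuity passage to $y=0$, and the control of the boundary term $|f(q^{2n})|\cdot\rho_q(n)$ via the geometric estimate $\rho_q(n)/\rho_q(k)\leq q^{k-n}$ are all exactly what the paper does. Your write-up is in fact slightly more explicit in unpacking the formula for $L_{d_q}(f_n)$ and in identifying $C=L_{d_q}(f)$, but the ideas are identical.
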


The metric $d_q \colon X_q \times X_q \to [0,\infty)$ yields a Lipschitz algebra $C_{\Lip}(X_q) \subset C(X_q)$ and the semi-norm $L_{D_q} \colon I_q \to [0,\infty]$ yields an \emph{a priori different} Lipschitz algebra $\Lip_{D_q}(I_q) \subset I_q$. The Lipschitz algebras $C_{\Lip}(X_q)$ and $\Lip_{D_q}(I_q)$ agree with the domains of the semi-norms $L_{d_q}$ and $L_{D_q}$, respectively (recall that the domain consists of the elements where a semi-norm is finite). Moreover, the two unital commutative $C^*$-algebras $C(X_q)$ and $I_q$ are related by the $*$-isomorphisms $f \mapsto f(A)$. We are going to show that the $*$-isomorphism $f \mapsto f(A)$ restricts to a $*$-isomorphism $C_{\Lip}(X_q) \to \Lip_{D_q}(I_q)$ which is moreover isometric with respect to the semi-norms $L_{d_q}$ and $L_{D_q}$. 

Suppressing the identification $C(X_q) \cong I_q$ we have by now proved that the two semi-norms $L_{d_q}$ and $L_{D_q}$ agree on \emph{finite} linear combinations of the indicator functions $\chi_{\{q^{2k}\}}$, $k \in \NN_0$ (Lemma \ref{Lemma:Linearcomb}) and we have moreover succeeded in computing the semi-norm $L_{d_q} \colon C(X_q) \to [0,\infty]$ (Lemma \ref{Lemma:Subseq}). 

The passage from finite linear combinations of indicator functions to general Lipschitz elements is however quite subtle. To explain a bit what the subtle point is, we let $\mathcal{I}_q \subset I_q$ denote the smallest unital $*$-subalgebra containing all the projections $\chi_{\{q^{2k}\}}(A)$. Then even though $\mathcal{I}_q \subset I_q$ is norm-dense and the derivation $\partial : \Lip_{D_q}(I_q) \to B(H_+ \oplus H_-)$ is closed, it is not true that $\Lip_{D_q}(I_q)$ can be recovered by taking the closure of the restriction $\partial \colon \mathcal{I}_q \to B(H_+ \oplus H_-)$. In particular, for a general element $f(A) = f(0)+\sum_{k = 0}^\infty f(q^{2k}-f(0))\cdot \chi_{\{q^{2k}\}}(A) \in \Lip_{D_q}(I_q)$ we cannot a priori compute $\partial(f(A)) \in B(H_+ \oplus H_-)$ by using Lemma \ref{Lemma:Char} and applying the derivation $\partial$ term by term.

After these clarifications we are ready to state and prove the first main result of this section:

\begin{theorem}\label{Thm:Lip-identification}
The Lip-algebra of $I_q$ associated with the Dąbrowski-Sitarz spectral triple $(S_q^2,H_+ \oplus H_-,D_q)$ agrees with $\lbrace f(A) : f\in C_{\Lip}(X_q) \rbrace$, and for $f\in C_{\Lip}(X_q)$, we have $L_{D_q}(f(A))=L_{d_q}(f)$.
\end{theorem}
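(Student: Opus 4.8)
The plan is to prove the two inclusions together with the seminorm identity by combining a soft lower-semicontinuity argument (to establish membership in the Lip-algebra) with an exact, column-by-column computation of the derivative $\partial_1(f(A))$ (to pin down the seminorm and the reverse inclusion). Throughout I may assume $f(0)=0$, since adding a constant multiple of the unit changes neither $L_{D_q}$ nor $L_{d_q}$, both of which annihilate $\CC 1$.

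First I would settle the inclusion $\lbrace f(A):f\in C_{\Lip}(X_q)\rbrace\subseteq \Lip_{D_q}(I_q)$. Given $f\in C_{\Lip}(X_q)$ with $f(0)=0$, set $f_n:=f\cdot \chi_{\lbrace q^{2k}:k\leq n\rbrace}$, a finite linear combination of indicator functions. Since $f$ is continuous and $f(0)=0$ we have $\|f_n(A)-f(A)\|=\sup_{k>n}|f(q^{2k})|\to 0$, so $f_n(A)\to f(A)$ in norm. By Lemma \ref{Lemma:Linearcomb}, $f_n(A)\in\Lip_{D_q}(S_q^2)\cap I_q$ with $L_{D_q}(f_n(A))=L_{d_q}(f_n)$, and the second part of Lemma \ref{Lemma:Subseq} guarantees that the sequence $\lbrace L_{d_q}(f_n)\rbrace_n$ is bounded. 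As $L_{D_q}$ is lower semi-continuous, it follows that $L_{D_q}(f(A))\leq\liminf_n L_{D_q}(f_n(A))<\infty$, whence $f(A)\in\Lip_{D_q}(I_q)$. Note that only boundedness, not a sharp bound, of the approximating seminorms is needed here, so the fact that the cutoffs $f_n$ are \emph{not} norm-decreasing for $L_{d_q}$ is harmless at this step.

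The heart of the matter is to compute $\partial_1(f(A))$ exactly for an arbitrary $f(A)\in\Lip_{D_q}(I_q)$, and the decisive point is to avoid differentiating the series $f(A)=\sum_k f(q^{2k})\chi_{\lbrace q^{2k}\rbrace}(A)$ term by term, which is exactly the illegitimate move flagged before the theorem. Instead I would exploit the functional-calculus identity $f(A)\chi_{\lbrace q^{2k}\rbrace}(A)=f(q^{2k})\chi_{\lbrace q^{2k}\rbrace}(A)$ together with the fact that $\partial_1$ is a derivation for the bimodule structure given by $\rho_\pm$. Writing $p_k:=\chi_{\lbrace q^{2k}\rbrace}(A)$, applying $\partial_1$ to both sides of $f(A)p_k=f(q^{2k})p_k$ and inserting Lemma \ref{Lemma:Char}, the two surviving terms are identified via $\rho_-(f(A))\rho_-(p_j)b^*a^*=f(q^{2j})\rho_-(p_j)b^*a^*$, yielding
\[
\partial_1(f(A))\,\rho_+(p_k)=\frac{f(q^{2(k-1)})-f(q^{2k})}{q^{2(k-1)}(1-q^2)}\,\rho_-(p_{k-1})\,b^*a^*,\qquad k\in\NN_0,
\]
with the $k=0$ column vanishing. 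This is valid for \emph{every} Lipschitz $f$, because $f(A)$ and each $p_k$ lie in $\Lip_{D_q}(S_q^2)$ and the derivation rule is exact, sidestepping entirely the failure of the term-by-term computation.

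Finally I would read off the operator norm. The eigenspaces $\rho_+(p_k)H_+$ of $\rho_+(A)$ are mutually orthogonal, as are the $\rho_-(p_{k-1})H_-$, and the displayed formula shows that $\partial_1(f(A))$ carries the $k$-th summand of $H_+$ into the $(k-1)$-st summand of $H_-$; hence $\partial_1(f(A))$ is a weighted shift and its norm is the supremum of the norms of its blocks. The norm of the $k$-th block is exactly the quantity computed in Lemma \ref{Lemma:Linearcomb} (cf. \eqref{eq:paone}), namely $\rho_q(k)\,|f(q^{2k})-f(q^{2(k+1)})|$, so that
\[
\|\partial_1(f(A))\|=\sup_{k\in\NN_0}\rho_q(k)\,|f(q^{2k})-f(q^{2(k+1)})|=L_{d_q}(f),
\]
the last equality being Lemma \ref{Lemma:Subseq}. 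In particular $L_{d_q}(f)\leq\|\partial_1(f(A))\|<\infty$, so any $f(A)\in\Lip_{D_q}(I_q)$ has $f\in C_{\Lip}(X_q)$, giving the reverse inclusion; and since the same computation applied to $\overline f$ gives $\|\partial_1(\overline f(A))\|=L_{d_q}(\overline f)=L_{d_q}(f)$, the preliminary formula $L_{D_q}(f(A))=\max\lbrace\|\partial_1(f(A))\|,\|\partial_1(\overline f(A))\|\rbrace$ delivers $L_{D_q}(f(A))=L_{d_q}(f)$. The main obstacle is precisely the exact column computation above: recognising that one should compress $\partial_1(f(A))$ against the spectral projections $p_k$ and invoke $f(A)p_k=f(q^{2k})p_k$ rather than attempt a forbidden term-by-term differentiation, and then verifying the orthogonality of the blocks that converts the individual block norms into the global operator norm.
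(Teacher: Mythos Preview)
Your proof follows essentially the same strategy as the paper: both arguments avoid the forbidden term-by-term differentiation by applying the Leibniz rule for $\partial_1$ against the spectral projections of $A$. The paper compresses against the partial sums $Q_n=\sum_{k\leq n} p_k$ while you compress against individual $p_k$, which amounts to the same computation, and your lower-semicontinuity argument for the inclusion $C_{\Lip}(X_q)\hookrightarrow\Lip_{D_q}(I_q)$ is identical to the paper's.

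There is one step you should not leave implicit. To pass from the column identities $\partial_1(f(A))\rho_+(p_k)=\cdots$ to the conclusion that $\|\partial_1(f(A))\|$ \emph{equals} the supremum of the block norms, you need the ranges $\rho_+(p_k)H_+$ to span $H_+$, i.e.\ $Q_n\to 1$ strongly on $H_+$. This is not automatic: $0$ lies in the spectrum of $A$, and one must rule out a nonzero spectral subspace at $0$ for $\rho_+(A)$. The paper handles this explicitly by invoking faithfulness of the Haar state together with the computation $h(Q_n)=(1-q^2)\sum_{k=0}^n q^{2k}\to 1$, which forces $Q_n\to 1$ in SOT on $L^2(SU_q(2))$ and hence on $H_+$. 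Without this, your weighted-shift argument yields only $\|\partial_1(f(A))\|\geq L_{d_q}(f)$; that suffices for the reverse inclusion $\Lip_{D_q}(I_q)\subseteq\{f(A):f\in C_{\Lip}(X_q)\}$, but not for the seminorm identity, since your lower-semicontinuity step gives only $L_{D_q}(f(A))\leq\liminf_n L_{d_q}(f_n)$ and the cutoff seminorms $L_{d_q}(f_n)$ need not converge to $L_{d_q}(f)$.
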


\begin{proof}
Let $f \in C(X_q)$ be given. 

Suppose first that $L_{D_q}(f(A)) < \infty$. For each $n \in \NN_0$ we define the projection $Q_n := \sum_{k = 0}^n \chi_{ \{q^{2k}\}}(A)$. Since $\partial_1$ is a derivation, we obtain from Lemma \ref{Lemma:Char} that
\[
\begin{split}
\partial_{1} (f(A)) Q_n & = \partial_1(f(A) Q_n )-f(A)\partial_1( Q_n) \\
& =  \sum_{k = 0}^{n-1} \big( f(q^{2k}) - f(q^{2(k+1)}) \big) \frac{1}{q^{2k} (1 - q^2)} \chi_{ \{q^{2k}\}}(A) \cdot b^* a^* .
\end{split}
\]
Following the proof of {\black Lemma \ref{Lemma:Linearcomb}} we then get that
\begin{equation}
\| \partial_{1} (f(A)) Q_n \| 
= \max\{ \vert f(q^{2k}) - f(q^{2(k+1)}) \vert \cdot \rho_q(k) : k \in \{0,1,\ldots,n-1\} \} 
\end{equation}
and hence (using that $Q_n$ is an orthogonal projection) we obtain the estimate
\begin{equation}\label{Eq:Proj1}
\begin{split}
\sup\{ \vert f(q^{2k}) - f(q^{2(k+1)}) \vert \cdot \rho_q(k) : k \in \NN_0 \}
& = \sup\{ \| \partial_{1} (f(A)) Q_n \|  : n \in \NN_0 \} \\ 
& \leq \| \partial_1( f(A))\|  .
\end{split}
\end{equation}
By Lemma \ref{Lemma:Subseq} this shows that $f$ is Lipschitz with respect to the metric $d_q$ and that
\[
L_{d_q}(f) \leq \| \partial_1(f(A)) \| .
\]
To prove that equality holds, we observe that by \cite[Theorem 6.2.17]{Timmermann}, 
\[
h(Q_n)= (1-q^2) \sum_{k=0}^n q^{2k}\underset{n\to \infty}{\To} 1,
\]
where $h$ denotes the Haar state on $SU_q(2)$. Since $h$ is faithful and $\lbrace Q_n \rbrace_{n = 0}^\infty$ is an increasing sequence of projections, $Q_n$ converges to the identity in the strong operator topology on $B(L^2(SU_q(2)))$, and hence also on $B(H_+)$. It now follows from \eqref{Eq:Proj1} and Lemma \ref{Lemma:Subseq} that for any $\xi$ in the unit ball of $H_+$, we have
\begin{align*}
\Vert \partial_1(f(A))\xi \Vert &= \lim_{n\to \infty} \Vert \partial_1(f(A))Q_n\xi \Vert   
\leq \sup\{ \|\del_1(f(A))Q_n \| : n \in \NN_0 \}\\
&= L_{d_q}( f) .
\end{align*}
and hence that $\Vert \partial_1(f(A))\Vert = L_{d_q}(f)$. Since we moreover have the identities 
\[
\Vert \partial_2(f(A))\Vert = \Vert \partial_1( \bar{f}(A)) \Vert = L_{d_q}(f)
\]
we may conclude that $L_{D_q}(f(A)) = L_{d_q}(f)$.

Suppose next that $f\in C(X_q)$ is Lipschitz with respect to the metric $d_q$. Since subtracting a constant changes neither the Lipschitz constant of $f$ nor $L_{D_q}(f(A))$, we may, without loss of generality, assume that $f(0)=0$. For each $n \in \NN_0$ define the function $f_n := f \cdot \chi_{\{q^{2k} : k\leq n\}}$. By Lemma \ref{Lemma:Subseq}, the sequence $\lbrace L_{d_q}(f_n)\rbrace_{n=0}^\infty$ is then bounded and moreover $f_n(A)$ converges to $f(A)$ in operator norm. 

Hence, since $L_{D_q}(f_n(A))=L_{d_q}(f_n)$ by Lemma \ref{Lemma:Linearcomb}, we obtain by lower semi-continuity of $L_{D_q} : I_q \to [0,\infty]$ that
\[
L_{D_q}(f(A)) \leq \sup\lbrace L_{D_q}(f_n(A)) : n \in \NN_0 \rbrace < \infty .
\]
This shows that $f(A) \in \Lip_{D_q}(I_q)$ and this ends the proof of the theorem.
\end{proof}

Theorem \ref{Theorem:Lip-Lip} now follows easily:
\begin{proof}[Proof of Theorem \ref{Theorem:Lip-Lip}]
The metric $d_q'$ on $X_q$ induced by $L_{D_q}$ is by definition given by
\begin{align*}
d_q'(x,y):=\sup\{|f(x)- f(y)| : f \in C(X_q), L_{D_q}(f(A))\leq 1\}.
\end{align*}
However, by Theorem \ref{Thm:Lip-identification} we have
\begin{align*}
d_q(x,y)&= \sup\{ |f(x)-f(y)| : f\in C(X_q), L_{d_q}(f)\leq 1\}\\
&=\sup\{|f(x)-f(y)| : f\in C(X_q), L_{D_q}(f(A))\leq 1\},
\end{align*}
and hence the two metrics agree.
\end{proof}
In the following, we will consider the behaviour of $(X_q,d_q)$ with respect to the Gromov-Hausdorff metric, and provide a proof of Theorem \ref{Theorem:Cont}. To this end, we first establish a preliminary result about the diameter of $X_q$:

\begin{lemma}
\label{Lemma:Diam}
It holds that $\lim_{q\to 1} d_q(0,1)=\pi$.
\begin{proof}
Observe that the function $\frac{1}{\rho_q}\colon x \mapsto (1-q^2)\frac{q^x}{\sqrt{1-q^{2(x+1)}}}$ is positive and decreasing on $(-1,\infty)$.  This yields the estimates
 \begin{equation}\label{Eq:IntegralRule}
  \int_1^\infty \frac{1}{\rho_q(x)}\,dx \leq  \sum_{k=0}^\infty \frac{1}{\rho_q(k)} \leq  \int_0^\infty \frac{1}{\rho_q(x)}\,dx .
 \end{equation}
Furthermore, it can be verified that $F(x):= \frac{1-q^2}{q\ln(q)}\arcsin(q^{x+1})$ is an antiderivative of $\frac{1}{\rho_q(x)}$ and  $\lim_{x\to \infty} F(x)=0$. We therefore obtain the inequalities
\[
-\frac{1-q^2}{q\ln(q)}\arcsin(q^2) \leq d_q(0,1) 
\leq - \frac{1-q^2}{q\ln(q)}\arcsin(q) .
\]
Since $\lim_{q\to 1}\frac{1-q^2}{q\ln(q)}=-2$ and $\arcsin(1) = \frac{\pi}{2}$ we may conclude that $\lim_{q \to 1} d_q(0,1) = \pi$.
\end{proof}
\end{lemma}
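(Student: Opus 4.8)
The plan is to reduce the statement to an elementary estimate on a single convergent series. By Definition \ref{d:defmetric}, applied with $x=q^{2\cdot 0}=1$ and $y=0$, the quantity in question is
\[
d_q(0,1)=\sum_{k=0}^\infty \frac{1}{\rho_q(k)}=\sum_{k=0}^\infty \frac{(1-q^2)q^k}{\sqrt{1-q^{2(k+1)}}},
\]
so the whole problem is to understand how this series behaves as $q\to 1$. The first step is to observe that the continuous extension $g_q(x):=1/\rho_q(x)$ is positive and strictly decreasing on $(-1,\infty)$: as $x$ grows the numerator $(1-q^2)q^x$ decreases while the denominator $\sqrt{1-q^{2(x+1)}}$ increases. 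This monotonicity is exactly what lets me compare the series with integrals.

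The second step is an integral comparison. Since $g_q$ is positive and decreasing, the standard sandwich for such functions gives
\[
\int_0^\infty g_q(x)\,dx \;\leq\; \sum_{k=0}^\infty g_q(k) \;\leq\; \int_{-1}^\infty g_q(x)\,dx,
\]
both integrals being finite (near $x=-1$ the integrand has an integrable singularity of order $(x+1)^{-1/2}$). The key computational input is that $g_q$ admits an explicit elementary antiderivative: a direct differentiation verifies that
\[
F(x):=\frac{1-q^2}{q\ln q}\arcsin\!\big(q^{x+1}\big)
\]
satisfies $F'=g_q$, and since $q^{x+1}\to 0$ we have $\lim_{x\to\infty}F(x)=0$. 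Consequently $\int_a^\infty g_q = -F(a) = -\tfrac{1-q^2}{q\ln q}\arcsin(q^{a+1})$ for every $a\geq -1$, so the lower and upper bounds above equal $-\tfrac{1-q^2}{q\ln q}\arcsin(q)$ and $-\tfrac{1-q^2}{q\ln q}\arcsin(1)$ respectively.

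The final step is a squeeze as $q\to 1$. The only genuine limit to compute is $\lim_{q\to 1}\frac{1-q^2}{q\ln q}=-2$, which follows from expanding $\ln q$ (or L'H\^opital) at $q=1$. Since $q^{a+1}\to 1$ for each fixed $a\geq -1$, continuity of $\arcsin$ gives $\arcsin(q^{a+1})\to \arcsin(1)=\tfrac\pi2$, so both bounding quantities converge to $2\cdot\tfrac\pi2=\pi$. The squeeze theorem then yields $\lim_{q\to1}d_q(0,1)=\pi$. I expect the only delicate points to be getting the direction of the integral comparison right and verifying the antiderivative; the limit is then automatic, precisely because every $\int_a^\infty g_q$ shares the common limit $\pi$, so the particular choice of bounding integrals is immaterial.
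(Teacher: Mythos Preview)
Your proof is correct and follows the same strategy as the paper: compare the series $\sum_k 1/\rho_q(k)$ with integrals of the decreasing function $1/\rho_q$, evaluate these via the explicit antiderivative $F(x)=\frac{1-q^2}{q\ln q}\arcsin(q^{x+1})$, and squeeze as $q\to 1$. The only difference is cosmetic: you sandwich the sum between $\int_0^\infty$ and $\int_{-1}^\infty$, whereas the paper uses $\int_1^\infty$ and $\int_0^\infty$; as you anticipated, the direction of the comparison is the only delicate point, and your version has it right for a decreasing integrand.
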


\begin{proof}[Proof of Theorem \ref{Theorem:Cont}]
For each $q \in (0,1)$, we consider the isometric embedding $\iota_q\colon X_q \to \RR$ given by $\iota_q(x)=d_q(1,x)-\tfrac{\pi}{2}$. 

We start by proving continuity at a fixed $q_0\in (0,1)$. Let $\varps>0$ be given. Choose a $\delta_0 > 0$ such that $J := [q_0 - \delta_0, q_0 + \delta_0] \subset (0,1)$. From the estimate in \eqref{eq:estrho} we obtain that
\[
\sum_{k = 0}^\infty \sup \left\lbrace \frac{1}{\rho_q(k)} : q \in J \right\rbrace  
\leq \sum_{k = 0}^\infty \sup \lbrace q^k \sqrt{1 - q^2} : q \in J \rbrace  
\leq \sum_{k = 0}^\infty (q_0 + \delta_0)^k < \infty .
\]
We may therefore choose an $n_0 \in \NN_0$ such that
\begin{equation}\label{eq:contGHI}
\sum_{k=n_0}^{\infty} \frac{1}{\rho_q(k)}< \frac{\varps}{3}
\end{equation}
for all $q \in J = [q_0 - \delta_0, q_0 + \delta_0]$. Now, for each $k \in \NN_0$, the function $q \mapsto \sum_{k = 0}^{n_0 - 1}\frac{1}{\rho_q(k)}$ is continuous and we may thus choose a $\delta \in (0,\delta_0)$ such that 
\begin{equation}\label{eq:contGHII}
\left|\sum_{k=0}^{m-1} \frac{1}{\rho_q(k)} -  \sum_{k=0}^{m-1} \frac{1}{\rho_{q_0}(k)}\right|< \frac{\varps}{3} 
\end{equation}
for all $m \in \{1,\ldots,n_0\}$ and all $q \in (q_0 - \delta,q_0 + \delta)$.

Let now $q \in (q_0 - \delta,q_0 + \delta) \subset J$ be given. It then follows immediately from \eqref{eq:contGHII} that 
\[
|\iota_q(q^{2m}) - \iota_{q_0}(q_0^{2m})| < \frac{\varps}{3} < \varps
\]
for all $m \in \{1,\dots, n_0\}$. Moreover, for $m > n_0$ we apply \eqref{eq:contGHI} and \eqref{eq:contGHII} to estimate that
\[
\begin{split}
|\iota_q(q^{2m})- \iota_{q_0}(q_0^{2m})| &= \left| \sum_{k=0}^{n_0-1}\frac{1}{\rho_q(k)} +\sum_{k=n_0}^{m-1} \frac{1}{\rho_q(k)} - \sum_{k=0}^{n_0-1}\frac{1}{\rho_{q_0}(k)} - \sum_{k=n_0}^{m-1} \frac{1}{\rho_{q_0}(k)} \right| \\
&\leq | \iota_q( q^{2n_0}) - \iota_{q_0}(q_0^{2n_0}) | 
+ \sum_{k=n_0}^{\infty} \frac{1}{\rho_q(k)} + \sum_{k=n_0}^{\infty} \frac{1}{\rho_{q_0}(k)}\\
&<  \varps .
\end{split}
\]
A similar argument also shows that $|\iota_q(0) - \iota_{q_0}(0)| < \varps$. We conclude that 
\[
\operatorname{dist}_H(\iota_q(X_q),\iota_{q_0}(X_{q_0})) \leq \epsilon
\]
and hence that $(0,1) \ni q \mapsto (X_q,d_q)$ varies continuously in Gromov-Hausdorff distance.

For convergence, it suffices to show that the Hausdorff distance between $\iota_q(X_q)$ and $\left[-\tfrac{\pi}{2},\tfrac{\pi}{2}\right]$ converges to $0$ as $q\to 1$. To this end, let $\epsilon>0$ be arbitrary. By Lemma \ref{Lemma:Diam}, we may find a $q_1\in (0,1)$ such that for any $q\in (q_1,1)$, we have $|\iota_q(0)-\tfrac{\pi}{2}|<\epsilon$. Moreover, since $-\frac{\pi}{2}\leq \iota_q(x)\leq \iota_q(0)$ for all $x\in X_q$, it follows that for every $x\in X_q$ there exists a $y\in  \left[-\tfrac{\pi}{2},\tfrac{\pi}{2}\right]$ with $|\iota_q(x)-y|<\epsilon$. 
It remains to be shown that we can find a $q_2\in (0,1)$ such that given any $y\in \left[-\tfrac{\pi}{2},\tfrac{\pi}{2}\right]$ and any $q\in (q_2,1)$, we can find $x\in X_q$ such that $|y-\iota_q(x)|<\epsilon$. Since $\frac{1}{\rho_q(0)}=\sqrt{1-q^2} \underset{q\to 1}{\To} 0$ and $d_q(0,1) \underset{q\to 1}{\To} \pi$ by Lemma \ref{Lemma:Diam}
we can find a $q_2 \in (0,1)$ such that $\frac{1}{\rho_q(0)}<\epsilon$ and $\vert \iota_q(0)-\tfrac{\pi}{2}\vert <\tfrac{\epsilon}{2}$ for all $q\in(q_2,1)$. Let now $q \in (q_2,1)$ be given. It follows that $|y-\iota_q(0)|<\epsilon$ for $y\in \left(\tfrac{\pi}{2}-\tfrac{\epsilon}{2},\tfrac{\pi}{2}\right]$. On the other hand, we may for each $y\in \left[-\tfrac{\pi}{2},\tfrac{\pi}{2}-\tfrac{\epsilon}{2}\right]$ find an $n\in \NN_0$ such that $y\in [\iota_q(q^{2n}),\iota_q(q^{2(n+1)})]$ and consequently 
\[
\left|y-\iota_q(q^{2n})\right| \leq \left|\iota_q(q^{2n})-\iota_q(q^{2(n+1)})\right| = \frac{1}{\rho_q(n)}\leq \frac{1}{\rho_q(0)}<\epsilon . \qedhere
\]
\end{proof}
\begin{remark}
\label{Rem:Dist}
As stated in the introduction, Theorem \ref{Theorem:Cont} also applies if we replace the classical Gromov-Hausdorff distance with respectively the quantum Gromov-Hausdorff distance of Rieffel \cite{Rieffel-distance} or Latrémolière's propinquity. To see this, note that by \cite[Corollary 6.4]{Latremoliere} the former is dominated by two times the latter and by \cite[Theorem 6.6]{Latremoliere}, propinquity is dominated by the classical Gromov-Hausdorff distance on the class of compact metric spaces, and hence the convergence and continuity are also obtained for these distances. 
\end{remark}
\def\cprime{$'$} \def\cprime{$'$}

\end{document}